\documentclass[11pt]{article}

\usepackage{amsmath, amssymb}
\usepackage{tikz}

\usepackage{authblk}
\usepackage{geometry}
\usepackage{hyperref}
\usepackage{graphicx}
\usepackage{epstopdf}
\usepackage{mathrsfs}
\usepackage{amsfonts}
\usepackage{amscd,amsthm,bm,psfrag}
\usepackage[numbers,sort&compress]{natbib}

\makeatletter
\renewcommand{\@seccntformat}[1]{{\csname the#1\endcsname}{\normalsize.}\hspace{.5em}}

\makeatother

\numberwithin{equation}{section}

\def \[{\begin{equation*}}
\def \]{\end{equation*}}

\newtheorem{thm}{Theorem}[section]

\newtheorem{lem}[thm]{Lemma}

\newtheorem{prop}[thm]{Proposition}

\newtheorem{con}[thm]{Conjecture}

\newtheorem{cla}{Claim}

\newtheorem*{thm*}{Theorem}
\newtheorem*{prop*}{Proposition}

\newcommand\cC{{\mathcal C}}

\newcommand\cK{{\mathcal K}}

\newcommand\cN{{\mathcal N}}

\newcommand\ex{\ensuremath{\mathrm{ex}}}

\def\final{0}  
\def\iflong{\iffalse}
\ifnum\final=0  
\newcommand{\jnote}[1]{{\color{blue}[{\tiny \textbf{Junpeng:} \bf #1}]\marginpar{\color{blue}*}}}
\newcommand{\ynote}[1]{{\color{purple}[{\tiny \textbf{Yuhang:} \bf #1}]\marginpar{\color{purple}*}}}
\else 
\newcommand{\jnote}[1]{}
\newcommand{\ynote}[1]{}
\fi

\begin{document}

\title{Counting large cliques in graphs with a forbidden tree
}

\author[a,b,*]{Junpeng Zhou\,$^{\rm a,b}$, \ Xiying Yuan\,}

\affil[a]{\small Department of Mathematics, Shanghai University, Shanghai 200444, PR China}
\affil[b]{\small Newtouch Center for Mathematics of Shanghai University, Shanghai 200444, PR China}

\date{}

\maketitle

\footnotetext{*\textit{Corresponding author}.}
\footnotetext{Email addresses: \texttt{junpengzhou@shu.edu.cn} (J. Zhou), \texttt{xiyingyuan@shu.edu.cn} (X. Yuan).}

\begin{abstract}
Given graphs $H$ and $F$, the generalized Tur\'{a}n number $\ex(n,H,F)$ is the maximum number of copies of $H$ in an $n$-vertex $F$-free graph. Alon and Shikhelman (J. Combin. Theory Ser. B, 2016) initiated the systematic study of generalized Tur\'{a}n problems. Let $T$ be a tree on $k$ vertices, and write $n=a(k-1)+b$, where $0\leq b<k-1$. Recently, Gerbner and Palmer (Electron. J. Combin., 2026) proposed the following conjecture: for every $r\geq3$, the graph $aK_{k-1}\cup K_b$ maximizes the number of copies of $K_r$ among all $n$-vertex $T$-free graphs. In this paper, we verify their conjecture when $r=k-2$ or $r=k-3\geq5$. More precisely, we show that $\ex(n,K_r,T)=a\binom{k-1}{r}+\binom{b}{r}$ and characterize all extremal graphs.
\end{abstract}

{\noindent{\bf Keywords:} generalized Tur\'{a}n problem, tree, clique counting, Erd\H{o}s--S\'{o}s conjecture}

{\noindent{\bf AMS (2020) subject classifications:} 05C35, 05C05}

\section{\normalsize Introduction}
Throughout this paper, all graphs are finite, simple and undirected. Given graphs $H$ and $F$, a graph $G$ is called \textit{$F$-free} if it does not contain a copy of $F$ as a subgraph. Let $\cN(H,G)$ denote the number of copies of $H$ in $G$. Let
$$
 \ex(n,H,F)=\max\bigl\{\cN(H,G): |V(G)|=n \text{ and } G \text{ is } F\text{-free}\bigr\}.
$$
Alon and Shikhelman \cite{AlSh} initiated the systematic study of the function $\ex(n,H,F)$, which is often called the \textit{generalized Tur\'{a}n problem}. When $H=K_2$, the generalized Tur\'{a}n number reduces to the classical Tur\'{a}n number, that is, $\ex(n,K_2,F)=\ex(n,F)$.
For a recent survey on generalized Tur\'{a}n problems, one can refer to the work of Gerbner and Palmer \cite{GePa}.

In this paper, we consider the case where $H$ is a clique and $F$ is a tree. Let $T$ be a tree on $k$ vertices. The Erd\H{o}s--S\'{o}s conjecture \cite{Er} states that $ \ex(n,T)\leq (k-2)n/2$. When $k-1$ divides $n$, the disjoint union of copies of $K_{k-1}$ shows that the above bound is sharp. Gerbner, Methuku and Palmer \cite{GeMePa} proved that if the Erd\H{o}s--S\'{o}s conjecture holds for $T$ and its subtrees, then
$$
 \ex(n,K_r,T)\leq \frac{n}{k-1}\binom{k-1}{r}
$$
for $3\leq r\leq k-3$. This bound is also sharp when $k-1$ divides $n$. Letzter \cite{Le} determined the order of magnitude of $\ex(n,H,T)$ for arbitrary fixed graphs $H$ and trees $T$.

Several exact results are known for special trees.
When $T$ is a star, the result follows from the Gan--Loh--Sudakov conjecture~\cite{GaLoSu}, proved by Chase~\cite{Ch} and later given a unified proof by Chao and Dong~\cite{ChDo}. When $T$ is a path, Luo~\cite{Lu} established the corresponding upper bound. Later, Chakraborti and Chen~\cite{ChCh} determined the exact value and characterized all extremal graphs. Furthermore, Gerbner~\cite{GeDS} obtained exact results for double stars.

In \cite{GePa}, Gerbner and Palmer observed that for $n=a(k-1)+b$ with $0\leq b<k-1$, the natural analogue of the extremal construction above is $aK_{k-1}\cup K_b$. For $3\leq r\leq k-1$, each copy of $K_{k-1}$ contains $\binom{k-1}{r}$ copies of $K_r$. This suggests that the generalized Tur\'an problem may be easier for $r>2$ than the classical case $r=2$. Motivated by this observation, they proposed the following conjecture.

\begin{con}[Gerbner and Palmer~\cite{GePa}]\label{con:GP}
Let $r\geq3$ and $T$ be a tree on $k$ vertices. For $n=a(k-1)+b$ with $0\leq b<k-1$,
$$
 \ex(n,K_r,T)=\cN\bigl(K_r,aK_{k-1}\cup K_b\bigr)
 =a\binom{k-1}{r}+\binom{b}{r}.
$$
\end{con}

For $r\geq k$, we have $\ex(n,K_r,T)=0$, since $T\subseteq K_r$. The case $r=k-1$ is also immediate. Indeed, every copy of $K_{k-1}$ in a $T$-free graph must be a connected component. Otherwise, an edge joining the clique to a vertex outside it would yield a copy of $T$. Now we consider the next two cases, namely $r=k-2$ and $r=k-3$. In both cases, we verify the conjecture and characterize all extremal graphs.

\begin{thm}\label{thm:main}
Let $r\geq3$ and $T$ be a tree on $k$ vertices. For $n=a(k-1)+b$ with $0\leq b<k-1$, we have
\begin{equation*}
 \ex(n,K_r,T)=a\binom{k-1}{r}+\binom{b}{r}
\end{equation*}
when $r=k-2\geq3$ or $r=k-3\geq5$. Moreover, the extremal graphs are characterized as follows.
\begin{itemize}
 \item If $b\geq r$, then $aK_{k-1}\cup K_b$ is the unique extremal graph up to isomorphism.
 \item If $b<r$, then the extremal graphs are precisely the graphs of the form $aK_{k-1}\cup H$, where $H$ is an arbitrary graph on $b$ vertices.
\end{itemize}
\end{thm}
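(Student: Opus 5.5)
We argue component by component, so we may assume $G$ is a $T$-free graph on $n$ vertices maximizing $\cN(K_r,G)$. For each connected component $C$ of $G$ we aim to prove the local bound
\begin{equation*}
 \cN(K_r,C)\leq \frac{|C|}{k-1}\binom{k-1}{r},
\end{equation*}
with equality only if $C\cong K_{k-1}$ or $\cN(K_r,C)=0$. We also want a sharper statement for components with $|C|<k-1$: there the trivial bound $\binom{|C|}{r}$ holds, with equality iff $C$ is complete or $|C|<r$. Given these, the global count follows from a concavity/exchange argument. Write the component sizes as $n_i$. A component that is not $K_{k-1}$ contributes at most $f(n_i)$, where $f(m)=\binom{m}{r}$ for $m\le k-1$ and $f(m)\le \frac{m}{k-1}\binom{k-1}{r}$ in general. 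Since $\binom{m}{r}/m$ is increasing in $m$, merging the ``leftover'' parts into blocks of size $k-1$ only helps. Hence the total is at most $a\binom{k-1}{r}+\binom{b}{r}$, and tracking equality cases gives the characterization, including the freedom of $H$ when $b<r$.

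The core is the local bound for a connected $T$-free component $C$ with $|C|\ge k$ (for $|C|\le k-1$ it is trivial). The key tool is a standard greedy-embedding fact: a tree on $k$ vertices embeds in any graph of minimum degree at least $k-1$, and, more usefully, a large clique with one attached vertex hosts many trees. We split on $r$.

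\textbf{Case $r=k-2$.} Suppose $C$ contains a $K_{k-2}$, say $Q$. Since $C$ is connected and $|C|\ge k$, there is a vertex $v\notin Q$ adjacent to $Q$, and then a further vertex $w$ adjacent to $Q\cup\{v\}$. We would show that $T$ embeds into $Q\cup\{v,w\}$ unless adjacency is very restricted. Every tree on $k$ vertices has a leaf, and removing two suitable leaves (or a leaf and its neighbour path) leaves a tree on $k-2$ vertices, which fits inside $Q$. We then need to attach the last two vertices through $v$ and $w$, choosing the embedding of the $(k-2)$-vertex subtree so that the attachment points land on neighbours of $v$ and $w$. This should force every $K_{k-2}$ to be ``almost isolated''. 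Then a double count follows: each vertex lies in few $K_{k-2}$'s relative to $\binom{k-1}{r}/(k-1)=\binom{k-2}{r-1}/r$. Concretely, if a vertex $u$ lies in two distinct $K_{k-2}$'s, their union spans at least $k-1$ vertices with high density, giving a near-$K_{k-1}$ plus a neighbour, and hence $T$. Consequently the $K_{k-2}$'s are almost vertex-disjoint, so $\cN(K_{k-2},C)\le |C|/(k-2)$, which is below $|C|\,(k-1)/(k-1)\cdot\frac{1}{1}$ by the required margin.

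\textbf{Case $r=k-3\ge5$.} We use the same strategy one level down. We analyze the structure around a $K_{k-3}$: the neighbourhood of a $(k-3)$-clique can add at most a few vertices before a tree on $k$ vertices embeds. We will use $k\ge8$ so that, for most trees, a $(k-3)$-clique together with three external vertices having several neighbours in it already contains $T$. The exceptional trees here, mainly stars and paths, are covered by the known exact results (Chase; Chakraborti--Chen), so we may assume $T$ is neither a star nor a path. We then bound, for each vertex $u$, the number of $K_{k-3}$ containing $u$ by $\binom{k-2}{k-4}$, which is the count inside $K_{k-1}$. Summing over $u$ gives $\cN(K_{k-3},C)\le \frac{|C|}{k-3}\binom{k-2}{2}=\frac{|C|}{k-1}\binom{k-1}{k-3}$.

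\textbf{Main obstacle.} The hardest step is this per-vertex bound for $r=k-3$. One must show that the link of $u$, whose $(k-4)$-cliques are counted, cannot have more $(k-4)$-cliques than $K_{k-2}$ does without creating $T$. The link has small clique number constraints but can be large. We would try to show that two $(k-3)$-cliques through $u$ sharing at most $k-6$ vertices already yield enough room (roughly $k$ vertices with high min-degree along a spanning structure) to embed $T$ greedily, leaf by leaf. This forces all $K_{k-3}$'s through $u$ into a set of at most $k-1$ vertices, and hence gives the bound. The hypothesis $k-3\ge5$ is exactly what makes the overlaps large enough for the greedy embedding.
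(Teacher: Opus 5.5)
There are two genuine gaps: the global aggregation step is invalid, and the structural claims in both cases are either false or unproved.

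\textbf{The aggregation step cannot give the exact bound.} Your local inequality $\cN(K_r,C)\le\frac{|C|}{k-1}\binom{k-1}{r}$ is essentially the fractional Gerbner--Methuku--Palmer bound, which is sharp only when $(k-1)\mid n$. Suppose $G$ were connected with $n=a(k-1)+b$ and $1\le b\le k-2$. Then your bound gives only $a\binom{k-1}{r}+\frac{b}{k-1}\binom{k-1}{r}$. This is strictly larger than $a\binom{k-1}{r}+\binom{b}{r}$, because $\binom{m}{r}/m=\binom{m-1}{r-1}/r$ is smaller at $m=b$ than at $m=k-1$.

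So the monotonicity you invoke points the wrong way. It penalises small components, but a large component is charged at the full $K_{k-1}$ rate and can absorb the $b$ leftover vertices at no cost. Strictness for $C\not\cong K_{k-1}$ gives no deficit of the required size $\frac{b}{k-1}\binom{k-1}{r}-\binom{b}{r}$. Summing per-vertex counts, as in your $r=k-3$ case, only reproduces the same fractional bound.

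The missing idea is an \emph{integral} decomposition. The paper groups the copies of $K_r$ into classes connected under intersection; their vertex sets $U(\cC)$ are pairwise disjoint. It proves $|U(\cC)|\le k-1$ for every class. It then applies the convexity/packing inequality for parts of size at most $k-1$ with total at most $a(k-1)+b$, which yields the $\binom{b}{r}$ term and the equality cases directly.

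\textbf{The structural claims are not established.} For $r=k-2$, your claim that the $K_{k-2}$'s are almost disjoint, with $\cN(K_{k-2},C)\le|C|/(k-2)$, is false. Take $T=K_{1,k-1}$ and let $C$ be $K_{k-1}$ minus an edge $xy$, with a pendant vertex attached to $x$. Then $C$ is connected, has $k$ vertices and maximum degree $k-2$, so it is $T$-free. It contains two $K_{k-2}$'s, while $k/(k-2)<2$. In particular, a near-$K_{k-1}$ plus a neighbour need not contain $T$.

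What is true for $r=k-2$ is the following chain. Two intersecting $K_{k-2}$'s share exactly $k-3$ vertices, since otherwise their union has at least $k$ vertices and contains $T$. This propagates along a class. A class on at least $k$ vertices then yields a $K_{k-2}$ plus two outside vertices, each with at least $k-3$ neighbours in it, and that configuration contains every tree on $k$ vertices.

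For $r=k-3$, knowing that two $K_{k-3}$'s meeting in at most $k-6$ vertices contain $T$ only gives pairwise intersections of size at least $r-2$. That does not confine the cliques to $k-1$ vertices. For example, $K_{k-4}$ joined to an independent set $\{x_1,\dots,x_m\}$ has all pairwise clique intersections equal to $r-1$, yet its union has $k-4+m$ vertices. So your ``this forces'' step needs an extra embedding lemma: a $K_r$ with $r\ge5$ plus three outside vertices, each with at least $r-2$ neighbours in it and at least two common neighbours there, contains every tree on $r+3$ vertices. This lemma must be combined with a case analysis of the sets $B\setminus A$ for cliques $B$ in the class of a fixed $A$. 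That is where the real work lies. The lemma also handles paths directly, so no appeal to the star and path results is needed.
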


The main idea is to consider the $r$-uniform hypergraph $\cK_r(G)$ whose hyperedges are the vertex sets of copies of $K_r$ in $G$. We show that every connected component of $\cK_r(G)$ has at most $k-1$ vertices. Thus, the desired bound follows from Lemma~\ref{lem:packing}.

The rest of this paper is organized as follows. In Section \ref{sec:prelim}, we establish several preliminary lemmas. The proof of Theorem~\ref{thm:main} will be presented in Section \ref{sec:proof}. Finally, we conclude with several remarks in Section \ref{sec:concluding}.

\section{\normalsize Preliminary lemmas}\label{sec:prelim}

For a graph $G$ and a vertex $x\in V(G)$, let $N_G(x)$ denote the \textit{neighborhood} of $x$ in $G$. For a vertex set $U\subseteq V(G)$, let $G[U]$ be the subgraph of $G$ induced by $U$ and $G-U:=G[V(G)\setminus U]$. For $x\in V(G)\setminus U$, set $N_U(x)=N_G(x)\cap U$.

First, we prove the following inequality.

\begin{lem}\label{lem:packing}
Let $q>r\geq2$ and $v_1,\ldots,v_m$ be nonnegative integers with $v_i\leq q$. Suppose that $\sum_{i=1}^m v_i\leq aq+b$, where $0\leq b<q$. Then
\begin{equation*}
 \sum_{i=1}^m\binom{v_i}{r}\leq a\binom{q}{r}+\binom{b}{r}.
\end{equation*}
If the equality holds, then exactly $a$ of the $v_i$ are equal to $q$. After these terms are removed, the following statements hold.
\begin{itemize}
 \item If $b\geq r$, then exactly one remaining term is positive, and it is equal to $b$.
 \item If $b<r$, then every remaining term is smaller than $r$.
\end{itemize}
\end{lem}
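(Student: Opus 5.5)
We argue by a shifting (convexity) argument on the multiset $(v_1,\ldots,v_m)$. Terms equal to $0$ contribute nothing, so we may delete them. The function $f(x)=\binom{x}{r}$ is convex on the nonnegative integers, since $f(x+1)-f(x)=\binom{x}{r-1}$ is nondecreasing in $x$. It is strictly increasing for $x\geq r-1$ and zero for $x\leq r-1$. We also use that $f$ is nondecreasing, so we may first assume $\sum v_i$ is as large as possible subject to the constraints. For the equality analysis, however, we must track when increasing a term strictly helps.

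\textbf{Step 1 (merging).} Suppose two terms satisfy $1\leq v_i\leq v_j<q$. Replace them by $v_i-1$ and $v_j+1$. The sum is unchanged, the bound $v\leq q$ is preserved, and the change in $\sum f$ is
\begin{equation*}
 \binom{v_j}{r-1}-\binom{v_i-1}{r-1}\geq 0.
\end{equation*}
This change is strictly positive exactly when $v_j\geq r-1$ and $v_j>v_i-1$, and the latter always holds. Iterating terminates, because $\sum v_i^2$ strictly increases. We end with at most one term strictly between $0$ and $q$, and all other terms equal to $q$ or $0$.

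Let $c$ be the number of terms equal to $q$ and $s<q$ the leftover. Then $cq+s=\sum v_i\leq aq+b$, so either $c\leq a-1$, or $c=a$ and $s\leq b$. In the second case the value is $a f(q)+f(s)\leq af(q)+f(b)$. In the first case the value is at most $cf(q)+f(q-1)$, and using $f(q-1)<f(q)$ (since $q-1\geq r$) this is strictly less than $(c+1)f(q)\leq af(q)$. This gives the inequality.

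\textbf{Step 2 (equality).} Assume equality holds for the original sequence. By Step 1, every merge along the way must have had zero gain. We also need the final configuration to be of the form $c=a$ with $f(s)=f(b)$, since the case $c\leq a-1$ gives strict inequality. We then read off the structure.

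Rather than tracking the merges, it is cleaner to argue directly. Suppose the original sequence had two terms with $r\leq v_i\leq v_j<q$, or more generally any pair with $v_i\geq1$, $v_j<q$, $v_j\geq r-1$ and $v_i\leq v_j$. Then a single merge gives a strict gain, which contradicts the bound just proved. So among terms less than $q$, at most one is $\geq r$. Indeed, if $v_j\geq r$ and $1\leq v_i\leq v_j$ with $v_j<q$, we get a strict gain. Hence if some term less than $q$ is at least $r$, it is the only positive term below $q$. Otherwise all terms below $q$ are less than $r$.

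Let $c$ be the number of terms equal to $q$. In the first situation the value is $cf(q)+f(s)$ with $s<q$ and $cq+s\leq aq+b$. Comparing with the bound as in Step 1 forces $c=a$ and $s\geq b$, hence $s=b$, which requires $b\geq r$. In the second situation the value is $cf(q)$, which forces $c=a$ and $f(b)=0$, i.e.\ $b<r$. Here we use $f(q)>0$ and that $c\leq a$ follows from $cq\leq aq+b<(a+1)q$. These are exactly the two bullets.

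The main obstacle is the careful bookkeeping in the equality case, specifically excluding $c<a$. We use $c q+s\leq aq+b$ together with $f(s)\leq f(q-1)<f(q)$, which relies on $q>r$.
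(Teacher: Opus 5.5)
Your proof is correct. The inequality uses the same shifting argument as the paper: the move $(x,y)\mapsto(x-1,y+1)$ for $1\leq x\leq y<q$. The bookkeeping differs in two places.

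\textbf{Upper bound.} The paper pads with extra parts of size one so that the total is exactly $aq+b$. The terminal configuration is then forced by the division algorithm to be $a$ parts of size $q$ and one part of size $b$. You skip the padding. Instead you split on whether the number $c$ of full parts satisfies $c\leq a-1$ or $c=a$, and dispose of the first case with $\binom{q-1}{r}<\binom{q}{r}$, which uses $q>r$.

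\textbf{Equality case.} This is where the routes genuinely differ. The paper traces the merging process. It argues that some step has strictly positive gain: either the step creating a new part of size $q$, or the step turning parts of sizes $1$ and $b-1$ into $0$ and $b$. You argue locally instead. An extremal sequence admits no single strictly improving merge, since otherwise the merged sequence (still admissible, with the same sum) would violate the bound you already proved.

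\textbf{Comparison.} Your version pins down the structure of the original sequence directly and does not depend on the order in which merges are performed. The cost is a short final comparison of the two possible configurations (one term in $[r,q)$, or all terms below $q$ smaller than $r$) against the bound. The paper's padding buys a one-line upper bound but makes its equality argument rely on reasoning about the run of the process.

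Your opening remark about maximizing $\sum v_i$ is never used and can be dropped.
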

\begin{proof}[\bf Proof]
Let $d=aq+b-\sum_{i=1}^m v_i$. We add $d$ parts of size one. Since $r\geq2$, these additional parts do not change the sum of the binomial coefficients. Thus, we may assume that the sum of all parts is $aq+b$.

Suppose that $1\leq x\leq y<q$ are two parts. We replace
them by $x-1$ and $y+1$, respectively. Since
$$
 \binom{x-1}{r}+\binom{y+1}{r}-\binom{x}{r}-\binom{y}{r}=
 \binom{y}{r-1}-\binom{x-1}{r-1}\geq0,
$$
this operation does not decrease the sum. Repeating this operation, we obtain $a$ parts of size $q$, one part of size $b$ when $b>0$, and all remaining parts of size zero. This proves the upper bound.

Now suppose that the equality holds. If fewer than $a$ of the original parts have size $q$, then a new part of size $q$ must be obtained during the above process. At this step, two parts of sizes $x$ and $q-1$ are replaced by parts of sizes $x-1$ and $q$, respectively. Since $x-1\leq q-2$ and $q>r$, the increase is $\binom{q-1}{r-1}-\binom{x-1}{r-1}>0$, which contradicts the equality assumption. Thus, at least $a$ of the original parts have size $q$. Since their total size is at most $aq+b<(a+1)q$, there cannot be more than $a$ such parts. Hence, exactly $a$ of the original parts have size $q$.

After these parts are removed, the remaining original parts have total size at most $b$. Suppose that $b\geq r$. If the remaining original parts do not consist of a single part of size $b$, then, after the additional parts of size one are included, there are at least two positive parts with total size $b$. In the last step that produces a part of size $b$, the parts of sizes $1$ and $b-1$ are replaced by parts of sizes $0$ and $b$. The increase is $\binom{b}{r}-\binom{b-1}{r}=\binom{b-1}{r-1}>0$, which contradicts the equality assumption. Therefore, exactly one remaining original part is positive, and it is equal to $b$.

If $b<r$, then the remaining original parts have total size at most $b<r$. Thus, every remaining part has size less than $r$. This completes the proof.
\end{proof}

We also need the following lemma on trees. A vertex $v$ of a tree $T$ is called a \textit{centroid} of $T$ if every component of $T-\{v\}$ has at most $\lfloor |V(T)|/2\rfloor$ vertices. Jordan \cite{Jor} proved that a centroid always exists.

\begin{lem}\label{lem:separation}
Let $T$ be a tree on $k$ vertices.
\begin{itemize}
 \item[\textbf{(i)}] If $k\geq5$, there is a vertex $u\in V(T)$ such that the components of $T-\{u\}$ can be divided into two families, each containing at most $k-3$ vertices in total.
 \item[\textbf{(ii)}] If $k\geq8$, there is a vertex $u\in V(T)$ such that the components of $T-\{u\}$ can be divided into two families, each containing at most $k-4$ vertices in total.
\end{itemize}
\end{lem}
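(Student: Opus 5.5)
Take $u$ to be a centroid of $T$, which exists by Jordan's theorem quoted above. Then every component of $T-\{u\}$ has at most $\lfloor k/2\rfloor$ vertices, and the components together contain $k-1$ vertices. The task is to split these components into two families whose totals are both at most $k-3$ in part (i), and at most $k-4$ in part (ii). Since the two totals add up to $k-1$, it is enough to find a family $\mathcal{F}$ of components whose total size $s$ satisfies $2\le s\le k-3$ for (i), or $3\le s\le k-4$ for (ii). The complementary family then has total $k-1-s$, which lies in the same range.

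The idea is to add components one at a time, largest first, and stop as soon as the running total reaches the lower threshold ($2$ or $3$). Because the sizes are positive integers, the total passes the threshold either exactly or by jumping over it. We do not need a precise stopping point. We only need the first total that reaches the threshold to still be at most the upper bound. Let $c_1\ge c_2\ge\cdots$ be the component sizes.

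\textbf{Part (i).} If $c_1\ge2$, take $\mathcal{F}=\{C_1\}$, so $s=c_1\le\lfloor k/2\rfloor\le k-3$; the last inequality holds for $k\ge5$ (for $k=5$, $\lfloor 5/2\rfloor=2=k-3$). Otherwise every component is a single vertex, so $T$ is a star with $k-1\ge4$ leaves, and two leaves give $s=2\le k-3$.

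\textbf{Part (ii).} If $c_1\ge3$, take $s=c_1\le\lfloor k/2\rfloor\le k-4$, which holds for $k\ge8$. If $c_1\le2$, add the components in decreasing order until the total is at least $3$. At the moment we stop, the total was at most $2$ just before the last addition, and the last component has size at most $2$, so $s\le4\le k-4$ for $k\ge8$. Such a stopping point exists because the total size of all components is $k-1\ge7\ge3$.

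The only delicate point is the boundary arithmetic at the smallest allowed values ($k=5$ in (i), $k=8$ in (ii)), where $\lfloor k/2\rfloor$ equals the upper bound exactly. The proof works by checking the inequalities $\lfloor k/2\rfloor\le k-3$ for $k\ge5$ and $\lfloor k/2\rfloor\le k-4$ for $k\ge8$ directly.
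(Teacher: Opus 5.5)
Your proof is correct and follows essentially the same route as the paper. Both take a centroid and use $\lfloor k/2\rfloor\le k-3$ (resp.\ $k-4$) to let a single large component serve as one family. Otherwise both collect small components (two leaves of the star in (i), components of size at most $2$ reaching a total of $3$ or $4$ in (ii)), and the complementary family lands in the same range because the totals sum to $k-1$.
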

\begin{proof}[\bf Proof]
Choose a centroid $u$ of $T$. Then every component of $T-\{u\}$ has order at most $\lfloor k/2\rfloor$.

If $k\geq 5$, then every component of $T-\{u\}$ has order at most $k-3$. If some component has at least two vertices, then let this component form the first family and let all remaining components form the second family. Otherwise, every component has order one, and we take any two components as the first family and all remaining components as the second family. In both cases, the first family contains between $2$ and $k-3$ vertices. Since the components of $T-\{u\}$ have total order $k-1$, the second family also contains between $2$ and $k-3$ vertices.

If $k\geq8$, then every component of $T-\{u\}$ has order at most $k-4$. If some component has at least three vertices, then let this component form the first family and let all remaining components form the second family. Otherwise, every component has order one or two. Since the components have total order $k-1\geq7$, we may choose a collection of components with total order three or four to form the first family and let all remaining components form the second family.
In both cases, the first family contains between $3$ and $k-4$ vertices. Since the components of $T-\{u\}$ have total order $k-1$, the same holds for the second family. This completes the proof.
\end{proof}

Next, we prove three tree embedding lemmas. We first consider two intersecting cliques.

\begin{lem}\label{lem:two-cliques}
Let $T$ be a tree on $k$ vertices. Let $V_1$ and $V_2$ be the vertex sets of two copies of $K_r$ in a graph $G$. Suppose that $V_1\cap V_2\neq\emptyset$, $|V_1\cup V_2|\geq k$, and $r=k-2\geq3$ or $r=k-3\geq5$. Then $G[V_1\cup V_2]$ contains a copy of $T$.
\end{lem}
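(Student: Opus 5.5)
We would place the vertex $u$ supplied by Lemma~\ref{lem:separation} at a vertex of $V_1\cap V_2$ and embed the two families of components of $T-\{u\}$ into the two cliques.

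Set $I=V_1\cap V_2$, $A=V_1\setminus V_2$, $B=V_2\setminus V_1$ and $s=|I|\ge1$. Then $|V_1\cup V_2|=2r-s\ge k$ gives $s\le 2r-k$. When $r=k-2$ this is $s\le k-4=r-2$, so $|A|=|B|=r-s\ge2$. When $r=k-3$ it is $s\le k-6=r-3$, so $|A|=|B|\ge3$. Take $u$ as in Lemma~\ref{lem:separation}: part (i) for $r=k-2$, where $k\ge5$, and part (ii) for $r=k-3$, where $k\ge8$. This splits the components of $T-\{u\}$ into families $\mathcal F_1,\mathcal F_2$ with total orders $t_1,t_2$, where $t_1+t_2=k-1$ and each $t_i\le k-3$ in case (i), each $t_i\le k-4$ in case (ii). In both cases $t_i\le r-1$.

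The embedding goes as follows.
\begin{itemize}
\item Map $u$ to a vertex $w\in I$.
\item Map the vertices of the components in $\mathcal F_1$ injectively into $V_1\setminus\{w\}$, which is possible since $t_1\le r-1=|V_1\setminus\{w\}|$.
\item Map the vertices of $\mathcal F_2$ injectively into $V_2\setminus\{w\}$.
\end{itemize}
Every edge of $T$ then lies inside $\{w\}\cup(\text{image of one family})$, which is contained in a single clique $V_1$ or $V_2$, so every edge is present. Each component of $T-\{u\}$ is attached to $u$ only, and $w$ is adjacent to everything in $V_1\cup V_2$, so no further adjacency is needed.

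The remaining issue is disjointness, since both families may want to use $I\setminus\{w\}$. We resolve it by a counting step: give $\mathcal F_1$ the vertices of $A$ first and $\mathcal F_2$ the vertices of $B$ first, and split $I\setminus\{w\}$ between them only as needed. This works provided
\[
\max(t_1-|A|,0)+\max(t_2-|B|,0)\le s-1 .
\]
If both $t_i\ge |A|=r-s$, the left side equals $t_1+t_2-2(r-s)=k-1-2r+2s$. The required inequality $k-1-2r+2s\le s-1$ is equivalent to $s\le 2r-k$, which is exactly our hypothesis. If instead, say, $t_1<|A|$, then the left side is at most $t_2-|B|\le r-1-(r-s)=s-1$, which also suffices.

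The main obstacle is only keeping the bookkeeping of these cases straight. The substantive input is Lemma~\ref{lem:separation}, which guarantees $t_i\le r-1$; the hypothesis $|V_1\cup V_2|\ge k$ then supplies exactly enough room.
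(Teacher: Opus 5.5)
Your proposal is correct and follows essentially the paper's argument: you place the separating vertex $u$ from Lemma~\ref{lem:separation} at a vertex of $V_1\cap V_2$, embed each family into $V_1$ or $V_2$ respectively, and use $|V_1\cup V_2|\geq k$ to guarantee enough room. The paper handles disjointness greedily, filling the first family from $V_1\setminus V_2$ first and then counting the unused vertices of $V_2\setminus\{x\}$; this is equivalent to your symmetric demand count on $I\setminus\{w\}$. Your preliminary bounds $|A|=|B|\geq 2$ (resp.\ $\geq 3$) are correct but never used.
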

\begin{proof}[\bf Proof]
By Lemma~\ref{lem:separation}, we may choose a vertex $u\in V(T)$ and divide the components of $T-\{u\}$ into two families. Let $x\in V_1\cap V_2$ and identify $u$ with $x$. If $r=k-2\geq3$, then each family contains at most $k-3=r-1$ vertices. If $r=k-3\geq5$, then each family contains at most $k-4=r-1$ vertices.

Let $s_i$ denote the number of vertices contained in the $i$th family. Then $s_i\leq r-1$ and $s_1+s_2=k-1$. We first identify the vertices in the first family with distinct vertices of $V_1\setminus\{x\}$, choosing vertices from $V_1\setminus V_2$ whenever possible. If $s_1\leq |V_1\setminus V_2|$, then no vertex of $V_2\setminus\{x\}$ is used, and at least $r-1\geq s_2$ vertices remain available. If $s_1>|V_1\setminus V_2|$, then exactly $s_1-|V_1\setminus V_2|$ vertices of $V_2\setminus\{x\}$ are used. Hence, the number of unused vertices in $V_2\setminus\{x\}$ is
$$
 r-1-(s_1-|V_1\setminus V_2|)=|(V_1\cup V_2)\setminus\{x\}|-s_1 \geq k-1-s_1=s_2.
$$
Therefore, we may identify the vertices in the second family with distinct unused vertices of $V_2\setminus\{x\}$. Since $G[V_1]\cong G[V_2]\cong K_r$, the above identification gives a copy of $T$ in $G[V_1\cup V_2]$. This completes the proof.
\end{proof}

Now we consider a clique together with two or three additional vertices.

\begin{lem}\label{lem:two-outside}
Let $r\geq3$ and $V_0$ be the vertex set of a copy of $K_r$ in a graph $G$. Let $x_1,x_2\in V(G)\setminus V_0$ be distinct. If $|N_{V_0}(x_i)|\geq r-1$ for $i\in\{1,2\}$, then $G[V_0\cup\{x_1,x_2\}]$ contains every tree on $r+2$ vertices.
\end{lem}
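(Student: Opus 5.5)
The plan is to place two leaves of the tree on $x_1$ and $x_2$ and the remaining $r$ vertices bijectively on the clique $V_0$. Let $T$ be a tree on $r+2\geq5$ vertices.

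First, since $T$ has at least two vertices, it has at least two leaves; choose distinct leaves $\ell_1,\ell_2$. Let $p_i$ be the unique neighbour of $\ell_i$. Because $T$ has more than two vertices, $\ell_1$ and $\ell_2$ are not adjacent. Hence $p_1,p_2\notin\{\ell_1,\ell_2\}$, and $T'=T-\{\ell_1,\ell_2\}$ is a tree on exactly $r$ vertices containing $p_1$ and $p_2$. Note that $p_1=p_2$ is allowed.

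Second, we map $\ell_i\mapsto x_i$ and look for a bijection $\phi\colon V(T')\to V_0$ with $\phi(p_i)\in N_{V_0}(x_i)$ for $i\in\{1,2\}$. Since $G[V_0]\cong K_r$, every edge of $T'$ is realized under any bijection. The two edges $\ell_ip_i$ are realized by the constraint on $\phi(p_i)$. So such a $\phi$ gives a copy of $T$ in $G[V_0\cup\{x_1,x_2\}]$; the edge $x_1x_2$ is never needed.

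Third, we check that the required images exist. We split into two cases.
\begin{itemize}
 \item If $p_1\neq p_2$, we need distinct vertices $a\in N_{V_0}(x_1)$ and $b\in N_{V_0}(x_2)$. Pick any $a\in N_{V_0}(x_1)$. Then $N_{V_0}(x_2)\setminus\{a\}$ has size at least $r-2\geq1$, so a suitable $b$ exists.
 \item If $p_1=p_2$, we need a common neighbour. We have $|N_{V_0}(x_1)\cap N_{V_0}(x_2)|\geq 2(r-1)-r=r-2\geq1$, so one exists.
\end{itemize}
After fixing the images of $p_1$ and $p_2$, we extend $\phi$ arbitrarily to a bijection from the remaining vertices of $T'$ onto the remaining vertices of $V_0$; the counts match since $|V(T')|=|V_0|=r$.

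There is no real obstacle here. The only points needing care are the degenerate case $p_1=p_2$, which is handled by the intersection bound using $r\geq3$, and checking that removing two leaves leaves a tree on exactly $r$ vertices.
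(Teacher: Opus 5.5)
Your proof is correct and follows essentially the same route as the paper: place two leaves on $x_1,x_2$, send their neighbours to a common neighbour in $V_0$ when they coincide (the intersection bound gives at least $r-2\geq1$ choices) or to distinct neighbours otherwise, and fill in the remaining $r$ vertices bijectively on the clique $V_0$. Your extra checks, that the two leaves are non-adjacent and that $T-\{\ell_1,\ell_2\}$ is a tree on exactly $r$ vertices, are details the paper leaves implicit.
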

\begin{proof}[\bf Proof]
Let $T$ be a tree on $r+2$ vertices. We choose two leaves $z_1,z_2$ of $T$ and identify $z_1,z_2$ with $x_1,x_2$, respectively. Note that
$$
 \bigl|N_{V_0}(x_1)\cap N_{V_0}(x_2)\bigr| \geq |N_{V_0}(x_1)|+|N_{V_0}(x_2)|-|V_0| \geq r-2\geq1.
$$
If the two leaves have the same neighbor, then we may identify their common neighbor with a vertex of $N_{V_0}(x_1)\cap N_{V_0}(x_2)$.
If the two leaves have distinct neighbors, then we choose distinct vertices $y_i\in N_{V_0}(x_i)$ for $i\in\{1,2\}$ and identify the neighbor of $z_i$ with $y_i$. Such a choice exists since $|N_{V_0}(x_i)|\geq r-1\geq2$ for $i\in\{1,2\}$.

In either case, we greedily identify the remaining vertices of $T$ with distinct unused vertices of $V_0$. Since $G[V_0]\cong K_r$, the above identification gives a copy of $T$ in $G[V_0\cup\{x_1,x_2\}]$. This completes the proof.
\end{proof}

\begin{lem}\label{lem:three-outside}
Let $r\geq5$ and $V_0$ be the vertex set of a copy of $K_r$ in a graph $G$. Let $x_1,x_2,x_3\in V(G)\setminus V_0$ be distinct. Suppose that $|N_{V_0}(x_i)|\geq r-2$ for every $i\in[3]$ and $\bigl|N_{V_0}(x_1)\cap N_{V_0}(x_2)\cap N_{V_0}(x_3)\bigr|\geq2$. Then $G[{V_0}\cup\{x_1,x_2,x_3\}]$ contains every tree on $r+3$ vertices.
\end{lem}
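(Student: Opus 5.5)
The plan is to split according to the number of leaves of $T$. Every tree on $r+3\geq 8$ vertices either has at least three leaves or is the path $P_{r+3}$. In both cases $x_1,x_2,x_3$ are mapped to suitable vertices of $T$, the tree-neighbours of these vertices are placed inside the appropriate neighbourhoods $N_i:=N_{V_0}(x_i)$, and the rest of $T$ is filled in greedily using that $G[V_0]\cong K_r$. Throughout I use $|N_i|\geq r-2\geq 3$ and $|N_1\cap N_2\cap N_3|\geq 2$.

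\textbf{Case 1: $T$ has at least three leaves.} Choose leaves $z_1,z_2,z_3$ with neighbours $p_1,p_2,p_3$, and identify $z_i$ with $x_i$. Deleting the three leaves leaves exactly $r$ vertices, which will be mapped bijectively onto $V_0$. The only constraint is that $p_i$ must go into $N_i$, respecting the pattern of coincidences among the $p_i$. There are three subcases.
\begin{itemize}
\item If $p_1=p_2=p_3$, send this vertex to a vertex of $N_1\cap N_2\cap N_3$.
\item If exactly two coincide, say $p_i=p_j\neq p_l$, send $p_i$ to some $c\in N_1\cap N_2\cap N_3$. Then send $p_l$ to a vertex of $N_l\setminus\{c\}$, which is nonempty since $|N_l|\geq 3$.
\item If the $p_i$ are pairwise distinct, a system of distinct representatives of $N_1,N_2,N_3$ exists, because each set has size at least $3$ (Hall's condition holds trivially).
\end{itemize}
The remaining non-leaf vertices of $T$ are then placed arbitrarily on the unused vertices of $V_0$. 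All edges among them are present because $G[V_0]\cong K_r$.

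\textbf{Case 2: $T=P_{r+3}$.} This is the step I expect to be the main obstacle, since there are only two leaves available and one $x_i$ must become an interior vertex of the path. Pick distinct $a,b\in N_1\cap N_2\cap N_3$ and some $y\in N_2\setminus\{a,b\}$; such $y$ exists since $|N_2|\geq r-2\geq 3$. Form the path
$$x_1,\ a,\ x_3,\ b,\ w_1,\ldots,w_{r-3},\ y,\ x_2,$$
where $w_1,\ldots,w_{r-3}$ is an arbitrary ordering of $V_0\setminus\{a,b,y\}$. The path has $3+r=r+3$ vertices. Its edges are present because $a\in N_1\cap N_3$, $b\in N_3$, and $y\in N_2$, and every consecutive pair inside $V_0$ is joined since $V_0$ is a clique.

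Together the two cases give a copy of every tree on $r+3$ vertices in $G[V_0\cup\{x_1,x_2,x_3\}]$.
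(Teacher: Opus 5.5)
Your proof is correct and follows essentially the same approach as the paper. You split into the path case and the case of at least three leaves, map those leaves to $x_1,x_2,x_3$, and fill the rest of the tree greedily inside the clique $V_0$. The differences are cosmetic: for the path you use $x_1,a,x_3,b,\ldots,y,x_2$, whereas the paper uses $x_1,c_1,x_2,c_2,x_3,w,\ldots$, and in the two-coinciding subcase you use one common neighbour plus a vertex of $N_l\setminus\{c\}$ instead of two common neighbours.
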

\begin{proof}[\bf Proof]
Let $T$ be a tree on $r+3$ vertices. First, suppose that $T=P_{r+3}$. We choose distinct vertices $c_1,c_2\in N_{V_0}(x_1)\cap N_{V_0}(x_2)\cap N_{V_0}(x_3)$. Since $|N_{V_0}(x_3)|\geq r-2\geq3$, we may choose a vertex $w\in N_{V_0}(x_3)\setminus\{c_1,c_2\}$. Then $x_1,c_1,x_2,c_2,x_3,w$ followed by the vertices of $V_0\setminus\{c_1,c_2,w\}$ in any order, forms a Hamiltonian path $P_{r+3}$ in $G[V_0\cup\{x_1,x_2,x_3\}]$.

Now suppose that $T$ is not a path. Then $T$ has at least three leaves. We choose three leaves $z_1,z_2,z_3$ and identify them with $x_1,x_2,x_3$, respectively. If the three leaves have a common neighbor, then we identify this vertex with a vertex of $N_{V_0}(x_1)\cap N_{V_0}(x_2)\cap N_{V_0}(x_3)$.
If the three leaves have exactly two distinct neighbors, then we may identify these two neighbors with two distinct vertices of $N_{V_0}(x_1)\cap N_{V_0}(x_2)\cap N_{V_0}(x_3)$.
If the three leaves have three distinct neighbors, then we greedily choose distinct vertices $y_i\in N_{V_0}(x_i)$ for $i\in[3]$ and identify the neighbor of $z_i$ with $y_i$. Such a choice exists since $|N_{V_0}(x_i)|\geq r-2\geq3$ for every $i\in[3]$.

In either case, we greedily identify the remaining vertices of $T$ with distinct unused vertices of $V_0$. Since $G[V_0]\cong K_r$, the above identification gives a copy of $T$ in $G[V_0\cup\{x_1,x_2,x_3\}]$. This completes the proof.
\end{proof}

\section{\normalsize Proof of Theorem \ref{thm:main}}\label{sec:proof}
Now we prove Theorem \ref{thm:main}. For the lower bound, the graph $aK_{k-1}\cup K_b$ is $T$-free and contains exactly $a\binom{k-1}{r}+\binom{b}{r}$ copies of $K_r$.

We now consider the upper bound. Let $G$ be an $n$-vertex $T$-free graph. Denote by $\cK_r(G)$ the family of vertex sets of copies of $K_r$ in $G$. Define an auxiliary graph $\Gamma(G)$ with vertex set $\cK_r(G)$, where two members are adjacent if they intersect. We call the connected components of $\Gamma(G)$ the \textit{$K_r$-components} of $G$. For a $K_r$-component $\cC$, let $U(\cC)=\bigcup_{A\in V(\cC)}A$.
Clearly, the sets $U(\cC)$ corresponding to distinct $K_r$-components are vertex-disjoint.

\begin{cla}\label{claim1}
    $|U(\cC)|\leq k-1$ for every $K_r$-component $\cC$ of $G$.
\end{cla}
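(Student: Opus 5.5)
We argue by contradiction: assume some $K_r$-component $\cC$ has $|U(\cC)|\geq k$, and find a copy of $T$ in $G$. Since $\cC$ is connected in $\Gamma(G)$, we grow $U(\cC)$ one clique at a time. Order the members of $\cC$ as $A_1,A_2,\ldots,A_m$ so that each $A_j$ ($j\geq2$) meets some earlier $A_i$; such an ordering exists by connectivity (e.g.\ breadth-first search in $\cC$). Let $W_j=A_1\cup\cdots\cup A_j$. Then $|W_1|=r<k$ and $|W_m|=|U(\cC)|\geq k$. Let $j$ be the first index with $|W_j|\geq k$. Then $|W_{j-1}|\leq k-1$ and $A_j$ meets some $A_i$ with $i<j$.

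The easy case is $|A_i\cup A_j|\geq k$. Then Lemma~\ref{lem:two-cliques} applies directly (both are copies of $K_r$, they intersect, and $r=k-2\geq3$ or $r=k-3\geq5$) and gives $T\subseteq G$. Hence we may assume that every pair of intersecting cliques in $\cC$ has union of size at most $k-1$. For $r=k-2$ this means intersecting cliques share at least $r-1$ vertices. For $r=k-3$ it means they share at least $r-2$ vertices.

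\textbf{Case $r=k-2$.} Any two intersecting members of $\cC$ differ in exactly one vertex. We fix $V_0=A_1$. We want two vertices $x_1,x_2\notin V_0$, each with at least $r-1$ neighbours in $V_0$, inside a set of total size $r+2=k$. Then Lemma~\ref{lem:two-outside} gives $T$.

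A member $A$ adjacent to $V_0$ in $\Gamma$ contributes one outside vertex $x$ with $|N_{V_0}(x)|\geq r-1$. If two neighbours of $V_0$ contribute distinct outside vertices, we are done. Otherwise all neighbours of $V_0$ lie inside $V_0\cup\{x\}$. In that case we choose a clique $A'$ at distance two from $V_0$ in $\Gamma$ that adds a new vertex $y$; one exists because $|U(\cC)|\geq k=r+2$.

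The clique $A'$ meets some $A\subseteq V_0\cup\{x\}$ in $r-1$ vertices. We then re-centre: we replace $V_0$ by $A$, and use $x'$, the vertex of $V_0\cup\{x\}$ not in $A$, together with $y$. Here $x'$ is adjacent to all of $A\setminus\{x\}$, so it has $r-1$ neighbours in $A$. Also $y$ has $r-1$ neighbours in $A$. So Lemma~\ref{lem:two-outside} applies with centre $A$.

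\textbf{Case $r=k-3\geq5$.} Here we use Lemma~\ref{lem:three-outside}. We look for a centre $V_0\in\cC$ and three outside vertices with at least $r-2$ neighbours each and at least two common neighbours. Intersecting cliques share at least $r-2$ vertices, so each neighbouring clique of $V_0$ adds at most two outside vertices, each adjacent to at least $r-2$ vertices of $V_0$. A clique adding two vertices $x,x'$ has $|A\cap V_0|=r-2$, and then $x,x'$ are both complete to that $(r-2)$-set.

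We take the first time the union reaches size at least $k=r+3$, and analyse the at most three new vertices relative to a suitable centre $V_0$ among the (few) cliques involved. The common-neighbourhood condition follows from counting. Three sets, each of size at least $r-2$ inside an $r$-set, have intersection at least $3(r-2)-2r=r-6$. This suffices only when $r\geq8$. For smaller $r$ we must use the structure more carefully: pairs of new vertices coming from the same clique share an $(r-2)$-set, and re-centring as in the previous case may help.

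I expect this case to be the main obstacle. The difficulty is choosing the centre so that both the degree condition ($\geq r-2$) and the triple-intersection condition ($\geq2$) hold. When only a degree condition fails, we fall back on Lemma~\ref{lem:two-cliques}, since two intersecting cliques with union at least $k$ already suffice.

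Finally, if $U(\cC)$ has exactly $k-1$ vertices but we reached size at least $k$ with vertices not forming such a configuration, that is the contradiction setting. So in all cases $|U(\cC)|\geq k$ forces $T\subseteq G$, proving the claim.
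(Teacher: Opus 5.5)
Your case $r=k-2$ is correct: re-centring at $A\subseteq V_0\cup\{x\}$ with the pair $x',y$ does meet the hypotheses of Lemma~\ref{lem:two-outside}. That branch is in fact empty. If $|A\cap B|=|B\cap C|=r-1$, then $|A\cap C|\geq 2(r-1)-r\geq1$, so any two members of $\cC$ intersect and no clique lies at distance two from $V_0$. The paper uses exactly this propagation to fix a single centre once and for all.

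The genuine gap is the case $r=k-3$, which you do not prove. First, you never show that your three outside vertices each have at least $r-2$ neighbours in one common centre $V_0$. That requires every clique supplying them to meet $V_0$, and your ``first index $j$ with $|W_j|\geq k$'' device does not force this. Second, your bound $3(r-2)-2r=r-6$ on the triple common neighbourhood is useless for $r\in\{5,6,7\}$. For those values you only suggest that re-centring ``may help''.

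The missing step is the same propagation. Intersecting members share at least $r-2$ vertices, and $2(r-2)-r=r-4\geq1$, so every two members of $\cC$ intersect and hence share at least $r-2$ vertices. Fix any $A\in\cC$ and put $L_B=B\setminus A$ and $R_B=A\setminus B$. Then $|L_B|=|R_B|\leq2$, the sets $L_B$ cover $U(\cC)\setminus A$ (at least three vertices), and every vertex of $L_B$ is adjacent to all of $A\setminus R_B$.

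What you need is to bound the union of the relevant $R$-sets by $3$ rather than $6$:
\begin{itemize}
\item If some $B,C$ have $|L_B\cup L_C|\geq3$, then $|L_B\cap L_C|\leq1$. The identity $r-|R_B\cup R_C|+|L_B\cap L_C|=|B\cap C|\geq r-2$ then gives $|R_B\cup R_C|\leq3$.
\item Otherwise every $|L_B|=1$. Three members with distinct $L_{B_i}=\{x_i\}$ then miss at most three vertices of $A$ in total.
\end{itemize}
In both cases you get three vertices of $U(\cC)\setminus A$, each with at least $r-2$ neighbours in $A$ and with at least $r-3\geq2$ common neighbours there. Lemma~\ref{lem:three-outside} then yields $T$. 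Without this bookkeeping, your proposal does not establish the claim for $r=k-3$.
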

\begin{proof}[\bf Proof of Claim.]
Suppose to the contrary that $|U(\cC)|\geq k$ for some $K_r$-component $\cC$. We distinguish the following two cases.

\medskip
\noindent {\bf Case 1.} $r=k-2$ and $k\geq5$.
\medskip

Since $G$ is $T$-free, Lemma~\ref{lem:two-cliques} implies that if two distinct members $A,B\in\cK_r(G)$ intersect, then $|A\cup B|\leq k-1=r+1$, and thus $|A\cup B|=r+1$. Therefore,
\begin{equation}\label{eq:int-one}
    |A\cap B|=|A|+|B|-|A\cup B|=r-1.
\end{equation}

We claim that any two distinct members in the same $K_r$-component intersect in $r-1$ vertices. Indeed, let $A,B,C\in\cK_r(G)$ be three distinct members such that $A\cap B\neq\emptyset$ and $B\cap C\neq\emptyset$. By \eqref{eq:int-one}, $|A\cap B|=|B\cap C|=r-1$. Hence, $|A\cap C|\geq r-2\geq1$. Then $|A\cap C|=r-1$ by \eqref{eq:int-one}.
Now let $A_0,A_1,\ldots,A_\ell$ be a path in $\Gamma(G)$. Repeatedly applying the above argument shows that $|A_0\cap A_i|=r-1$ for every $i\in[\ell]$, and we are done.

Fix $A\in V(\cC)$. Since $|A|=r=k-2$, there are two distinct vertices $x_1,x_2\in U(\cC)\setminus A$. For each $i\in[2]$, choose $B_i\in V(\cC)\setminus\{A\}$ containing $x_i$. By the above claim, $|A\cap B_i|=r-1$. Hence, $B_i=\{x_i\}\cup(A\cap B_i)$ and $|N_A(x_i)|\geq r-1$. By Lemma~\ref{lem:two-outside}, we may find a copy of $T$ in $G$, which is a contradiction. Thus, $|U(\cC)|\leq k-1$ for every $K_r$-component $\cC$ of $G$.

\medskip
\noindent {\bf Case 2.} $r=k-3$ and $k\geq8$.
\medskip

Since $G$ is $T$-free, Lemma~\ref{lem:two-cliques} implies that if two distinct members $A,B\in\cK_r(G)$ intersect, then $|A\cup B|\leq k-1=r+2$. Therefore,
\begin{equation}\label{eq:int-two}
 |A\cap B|=|A|+|B|-|A\cup B|\geq r-2.
\end{equation}

We claim that any two distinct members in the same $K_r$-component intersect in at least $r-2$ vertices. Indeed, let $A,B,C\in\cK_r(G)$ be three distinct members such that $A\cap B\neq\emptyset$ and $B\cap C\neq\emptyset$. By~\eqref{eq:int-two}, we have $|A\cap B|\geq r-2$ and $|B\cap C|\geq r-2$. Hence, $|A\cap C|\geq r-4\geq1$. Then $|A\cap C|\geq r-2$ by~\eqref{eq:int-two}. Now let $A_0,A_1,\ldots,A_\ell$ be a path in $\Gamma(G)$. Repeatedly applying the above argument shows that $|A_0\cap A_i|\geq r-2$ for every $i\in[\ell]$, and we are done.

Fix $A\in V(\cC)$. For each $B\in V(\cC)\setminus\{A\}$, let $L_B:=B\setminus A$ and $R_B:=A\setminus B$. Clearly, $|L_B|=|R_B|$. By the above claim, $|L_B|=|R_B|\leq2$. Since $|U(\cC)|\geq k=r+3$ and $|A|=r$, the union of all sets $L_B$ contains at least three vertices. This also implies that $|V(\cC)\setminus \{A\}|\geq2$.

Suppose that $|L_B\cup L_C|\geq3$ for some $B,C\in V(\cC)\setminus\{A\}$. Since $|B\cap C|\geq r-2$, we have
$$
 r-|R_B\cup R_C|+|L_B\cap L_C| =|B\cap C|\geq r-2.
$$
As $|L_B|,|L_C|\leq2$ and $|L_B\cup L_C|\geq3$, we have $|L_B\cap L_C|\leq1$. It follows that $|R_B\cup R_C|\leq3$.
Choose distinct vertices $x_1,x_2,x_3\in L_B\cup L_C$. Then each $x_i$ belongs to $L_B=B\setminus A$ or $L_C=C\setminus A$, and hence $|N_A(x_i)|\geq r-2$ for every $i\in[3]$. Moreover, all three vertices are adjacent to every vertex of $A\setminus(R_B\cup R_C)$. Since $|R_B\cup R_C|\leq3$,
$$
 \bigl|N_A(x_1)\cap N_A(x_2)\cap N_A(x_3)\bigr|\geq |A|-|R_B\cup R_C|\geq r-3\geq2.
$$
By Lemma~\ref{lem:three-outside}, we may find a copy of $T$ in $G$, which is a contradiction.

It remains to consider the case where $|L_B\cup L_C|\leq2$ for every $B,C\in V(\cC)\setminus\{A\}$. Since $B\neq A$ and $|A|=|B|$, we have $L_B\neq\emptyset$. We claim that $|L_B|=1$ for every $B\in V(\cC)\setminus\{A\}$. Otherwise, assume that $|L_B|=2$ for some $B\in V(\cC)\setminus\{A\}$. Since the union of all sets $L_{B'}$ contains at least three vertices, there is some $C\in V(\cC)\setminus\{A,B\}$ such that $L_C$ contains a vertex outside $L_B$. Hence, $|L_B\cup L_C|\geq3$, which is a contradiction.
Thus, we may choose $B_1,B_2,B_3\in V(\cC)\setminus\{A\}$ such that $L_{B_i}=\{x_i\}$ for three distinct vertices $x_1,x_2,x_3$. Recall that $|L_B|=|R_B|$ for each $B\in V(\cC)\setminus\{A\}$. Then $|R_{B_i}|=1$. Hence, $B_i=\{x_i\}\cup(A\cap B_i)$ and $|N_A(x_i)|\geq r-1$ for every $i\in[3]$. Moreover,
$$
 \bigl|N_A(x_1)\cap N_A(x_2)\cap N_A(x_3)\bigr|
 \geq \left|A\setminus\bigcup_{i=1}^3R_{B_i}\right|
 \geq r-3\geq2.
$$
By Lemma~\ref{lem:three-outside}, we may again find a copy of $T$ in $G$, which is a contradiction. Thus, $|U(\cC)|\leq k-1$ for every $K_r$-component $\cC$ of $G$.
\end{proof}

Now let $\cC_1,\ldots,\cC_m$ be the $K_r$-components of $G$. Set $|U(\cC_i)|=:n_i$. By Claim~\ref{claim1}, $n_i\leq k-1$ for every $i\in[m]$. Moreover, we have $\sum_{i=1}^m n_i\leq n$. Since $\cC_i$ contains at most $\binom{n_i}{r}$ copies of $K_r$, by Lemma~\ref{lem:packing}, we obtain
\begin{equation}\label{eq:upper}
\cN(K_r,G)=\sum_{i=1}^m|\cC_i|\leq\sum_{i=1}^m\binom{n_i}{r} \leq a\binom{k-1}{r}+\binom{b}{r}.
\end{equation}
This proves the upper bound.

Finally, we characterize the equality case. Suppose that the equality holds in~\eqref{eq:upper}. Then equality holds in both inequalities in~\eqref{eq:upper}. By Lemma~\ref{lem:packing}, there are exactly $a$ $K_r$-components containing $k-1$ vertices of $G$. Moreover, each of these components contains all $\binom{k-1}{r}$ copies of $K_r$ on its $k-1$ vertices. Hence, the $k-1$ vertices contained in each such component induce a copy of $K_{k-1}$.

We claim that every copy of $K_{k-1}$ is a connected component of $G$. Otherwise, assume that $G[Q]\cong K_{k-1}$ for $Q\subseteq V(G)$, and $xy\in E(G)$ for some $x\notin Q$ and $y\in Q$. Choose a leaf $z$ of $T$ and let $w$ be its neighbor. We identify $z$ with $x$ and $w$ with $y$, and then greedily identify the remaining $k-2$ vertices of $T$ with distinct vertices of $Q\setminus\{y\}$. In this way, we obtain a copy of $T$ in $G$, which is a contradiction. Thus, the $a$ copies of $K_{k-1}$ obtained above are connected components of $G$.

Suppose that $b\geq r$. By Lemma~\ref{lem:packing}, there is exactly one remaining $K_r$-component containing $b$ vertices of $G$. Since the equality holds in~\eqref{eq:upper}, this component contains all $\binom{b}{r}$ copies of $K_r$ on these $b$ vertices. Hence, these vertices induce a copy of $K_b$. These $a+1$ components contain all $n$ vertices of $G$, and therefore $G\cong aK_{k-1}\cup K_b$.

Now suppose that $b<r$. By Lemma~\ref{lem:packing}, every remaining $K_r$-component would contain fewer than $r$ vertices of $G$. However, every $K_r$-component contains a copy of $K_r$, and thus contains at least $r$ vertices. This implies that there are no remaining $K_r$-components.
Since the $a$ copies of $K_{k-1}$ obtained above are connected components of $G$ and contain $a(k-1)$ vertices, the remaining $b$ vertices induce a graph $H$ such that $G\cong aK_{k-1}\cup H$. Conversely, every graph of this form is $T$-free and contains $a\binom{k-1}{r}$ copies of $K_r$. This completes the proof of Theorem~\ref{thm:main}.

\section{\normalsize Concluding remarks}\label{sec:concluding}
We have proved Conjecture~\ref{con:GP} when $r=k-2$ or $r=k-3\geq5$. Combining Theorem~\ref{thm:main} with the immediate case $r=k-1$, we determine the exact value of $\ex(n,K_r,T)$ for $r\in\{k-3,k-2,k-1\}$, where $k\geq8$ is required when $r=k-3$.

The proof for $r=k-3$ relies on Lemmas~\ref{lem:two-cliques} and~\ref{lem:three-outside}, both of which require $r\geq5$, and therefore does not cover the two cases $(k,r)\in\{(6,3),(7,4)\}$. In particular, the conclusion of Lemma~\ref{lem:two-cliques} does not extend to the case $(k,r)=(7,4)$. 
Indeed, the graph formed by two copies of $K_4$ sharing exactly one vertex does not contain the seven-vertex tree obtained by subdividing every edge of $K_{1,3}$ once. Note that this graph is not a counterexample to Conjecture~\ref{con:GP}, since it contains only two copies of $K_4$. It would be interesting to determine whether Conjecture~\ref{con:GP} holds in these two exceptional cases and for all $3\leq r\leq k-4$.

\section*{\normalsize Funding}
The research of Zhou and Yuan was supported by the National Natural Science Foundation of China (Nos.~12271337 and 12371347).

\section*{\normalsize Declaration of interest}
The authors declare no known conflicts of interest.

\end{document}